\documentclass[a4paper,reqno]{amsart}

\usepackage[T1]{fontenc}
\usepackage{lmodern}
\usepackage{microtype}
\usepackage{amssymb,amsmath,amsthm,mathtools}
\usepackage{enumitem}
\usepackage{xcolor}
\usepackage[colorlinks,citecolor=blue,linkcolor=blue,urlcolor=blue]{hyperref}
\usepackage{aliascnt}
\usepackage[nameinlink,capitalise,noabbrev]{cleveref}

\allowdisplaybreaks
\numberwithin{equation}{section}
\newtheorem{theorem}{Theorem}[section]
\newaliascnt{proposition}{theorem}
\newtheorem{proposition}[proposition]{Proposition}
\aliascntresetthe{proposition}
\newaliascnt{lemma}{theorem}
\newtheorem{lemma}[lemma]{Lemma}
\aliascntresetthe{lemma}
\newaliascnt{corollary}{theorem}
\newtheorem{corollary}[corollary]{Corollary}
\aliascntresetthe{corollary}
\newaliascnt{remark}{theorem}
\newtheorem{remark}[remark]{Remark}
\aliascntresetthe{remark}
\newaliascnt{example}{theorem}
\newtheorem{example}[example]{Example}
\aliascntresetthe{example}
\newcommand{\Ext}{\operatorname{Ext}}
\newcommand{\Hom}{\operatorname{Hom}}
\newcommand{\End}{\operatorname{End}}
\newcommand{\rad}{\operatorname{rad}}
\newcommand{\soc}{\operatorname{soc}}
\newcommand{\add}{\operatorname{add}}
\newcommand{\modu}{\operatorname{mod}}
\newcommand{\perpA}{{}^{\perp}\!A}
\newcommand{\stableHom}{\underline{\Hom}}

\title[Self-extensions and radical cube zero]
{The Auslander--Reiten Conjecture for Algebras with Radical Cube Zero}
\author[X. Zhang]{Xiaojin Zhang}
\address{School of Mathematics and Statistics, Jiangsu Normal University,
Xuzhou 221116, Jiangsu, P. R. China}
\email{xjzhang@jsnu.edu.cn}
\author[P. Zhou]{Panyue Zhou}
\address{School of Mathematics and Statistics, Changsha University of Science and Technology,  Changsha 410114, Hunan, P. R. China}
\email{panyuezhou@163.com}
\makeatletter
\@namedef{subjclassname@2020}{\textup{2020} Mathematics Subject Classification}
\makeatother
\subjclass[2020]{16E30, 16G10, 16D90}
\keywords{Auslander--Reiten conjecture, self-extension, radical cube zero,
semi-Gorenstein-projective module, separated algebra}
\hypersetup{
 pdftitle={The Auslander--Reiten Conjecture for Algebras with Radical Cube Zero},
 pdfauthor={Xiaojin Zhang},
 pdfsubject={Self-extensions and homological conjectures},
 pdfkeywords={Auslander--Reiten conjecture, radical cube zero, separated algebra}
}

\begin{document}
\begin{abstract}
Let $A$ be a split finite-dimensional algebra over a field whose radical
$J$ satisfies $J^3=0$, and let $s$ be the number of isomorphism classes
of simple $A$-modules. We prove that a non-projective module $M$ with
$\Ext_A^i(M,A)=0$ for all $i>0$ has a non-zero self-extension in some
degree between $1$ and $3s+1$. In particular, $A$ satisfies the
Auslander--Reiten conjecture, which asserts that every self-orthogonal
generator is projective. As a consequence, every finite-dimensional
algebra over an algebraically closed field with radical cube zero satisfies the Auslander-Reiten conjecture.
\end{abstract}
\maketitle

\section{Introduction}

Auslander and Reiten \cite{AR75} proposed the Auslander--Reiten
conjecture. It asserts that a finitely generated module $M$ over an Artin
algebra $A$ is projective provided that
\begin{equation}\label{eq:ARC}
 \Ext_A^i(M,M\oplus A)=0\hspace{2mm}\text{for every }i>0.
\end{equation}
Here a module $M$ is \emph{self-orthogonal} if
$\Ext_A^i(M,M)=0$ for every $i>0$. A module $G$ is a \emph{generator}
if $A\in\add G$, where $\add G$ denotes the full subcategory consisting
of direct summands of finite direct sums of copies of $G$. Thus the
conjecture is equivalent to the assertion that every self-orthogonal
generator is projective. For a self-injective algebra, orthogonality to
$A$ is automatic. In this case the assertion is Tachikawa's second
conjecture. Chen and Xi \cite{ChenXi25} discuss its relation to the
Nakayama conjecture.

The conjecture is known for several classes of algebras with small
Loewy length. Xu \cite{Xu13,Xu15} proved it for local Artin algebras and
for special biserial algebras whose radicals have cube zero. Ringel and
Zhang \cite[Theorem~1.5]{RZ22} proved the stronger local statement that
every non-projective semi-Gorenstein-projective module over a short
local algebra, meaning a local algebra whose radical has cube zero, has
a non-zero self-extension in degree one. For symmetric
algebras, the corresponding result when the radical has cube zero goes
back to Hoshino \cite{Hoshino84}. Hoshino \cite{Hoshino89} obtained
related results. Broader accounts of known cases are given by Chen, Hu,
Qin and Wang \cite{CHQW23} and by Chen and Xi \cite{ChenXi25}.

Other homological conjectures are also known for algebras of Loewy
length three. Green and Zimmermann-Huisgen \cite{GreenZH91} proved the
finiteness of the finitistic dimension for Artinian rings whose radical
has cube zero. Dr\"axler and Happel \cite{DH92} proved the generalized
Nakayama conjecture under the assumptions $J^{2\ell+1}=0$ and
$A/J^\ell$ representation-finite. Taking $\ell=1$ includes algebras with
$J^3=0$. These results do not by themselves yield the Auslander--Reiten
conjecture for each such algebra. The equivalence between the latter
conjecture and the generalized Nakayama conjecture concerns their
validity over all Artin algebras and is not a pointwise equivalence for
an individual algebra. Chen, Hu, Qin and Wang
\cite[Introduction]{CHQW23} explain this distinction.

We give a uniform argument in Loewy length at most three, together
with an explicit bound on the first non-zero self-extension. Recall
that the Loewy length of $A$ is the least integer $m$ such that $J^m=0$.
Thus $J^3=0$ means that $A$ has Loewy length at most three. A
finite-dimensional $k$-algebra is \emph{split} if the endomorphism ring
of every simple module is $k$. Equivalently, its basic algebra has
semisimple quotient isomorphic to $k^s$ for some $s$. Here an algebra is
\emph{basic} if its regular module contains, up to isomorphism, each
indecomposable projective module exactly once as a direct summand.
Every finite-dimensional algebra over an algebraically closed field is
split. We write $A\text{-}\modu$ for the category of finitely generated
left $A$-modules. Put
\[
 \perpA=\{X\in A\text{-}\modu\mid
       \Ext_A^i(X,A)=0\text{ for all }i>0\}.
\]
Modules in $\perpA$ are also called semi-Gorenstein-projective modules.
Thus $\perpA$ consists precisely of the modules that are orthogonal to
the regular module $A$ in every positive extension degree.

\begin{theorem}\label{thm:main}
Let $A$ be a split finite-dimensional $k$-algebra, $J=\rad A$,
and let $s$ be the number of isomorphism classes of simple
$A$-modules. Assume that $J^3=0$. For $M\in\perpA$, the following
conditions are equivalent:
\begin{enumerate}[label=\textnormal{(\roman*)},leftmargin=2.2em]
 \item $M$ is projective.
 \vspace{2mm}
 \item $\Ext_A^q(M,M)=0$ for $1\le q\le 3s+1$.
\end{enumerate}
\end{theorem}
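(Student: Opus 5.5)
The plan is to argue the contrapositive: assume $M\in\perpA$ is non-projective, and produce a degree $q$ with $1\le q\le 3s+1$ and $\Ext_A^q(M,M)\neq 0$. The implication (i)$\Rightarrow$(ii) is trivial.

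\textbf{Reductions.} First I would use two standard facts about $\perpA$.
\begin{enumerate}[label=\textnormal{(\alph*)},leftmargin=2.2em]
 \item The class $\perpA$ is closed under syzygies. If $M$ has no projective summands, then neither does $\Omega^i M$, and $\Omega^i M$ is again non-projective; otherwise $\Ext^1(\Omega^{i-1}M,\Omega^i M)=0$ would split off a projective.
 \item For $X,Y\in\perpA$ the syzygy functor induces isomorphisms $\stableHom_A(X,Y)\cong\stableHom_A(\Omega X,\Omega Y)$, and $\Ext_A^q(X,Y)\cong\stableHom_A(\Omega^qX,Y)$ for $q\ge1$.
\end{enumerate}
From these, $\Ext^q_A(M,M)\cong\Ext^q_A(\Omega^j M,\Omega^j M)$ for every $j\ge0$. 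So we may replace $M$ by $N=\Omega M$, taken without projective summands. Since $N\subseteq JP_0$ and $J^3=0$, we have $J^2N=0$, $\operatorname{top}N\subseteq JP_0/J^2P_0$, and $J N\subseteq J^2P_0\subseteq\soc P_0$. The same holds for every $N_i:=\Omega^iN$. Each $N_i$ has Loewy length at most two and is determined up to the relevant data by its top, its socle layer $JN_i$, and the gluing map.

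\textbf{Combinatorial core.} Next I would record for each $i$ the support $T_i\subseteq\{1,\dots,s\}$ of $\operatorname{top}N_i$. I would also record the support $R_i$ of $JN_i$. Radical cube zero gives $\operatorname{top}N_{i+1}\subseteq\operatorname{top}(JP(N_i))$ and $JN_{i+1}\subseteq J^2P(N_i)$. Hence $T_{i+1}$ and $R_{i+1}$ are controlled by the arrows leaving $T_i$ in the quiver. I would then use the vanishing $\Ext^1_A(N_i,A)=0$ to show an \emph{exactness} property. Dualising the sequence $0\to N_{i+1}\to P(N_i)\to N_i\to0$ into $A$ must stay exact, and since $J^3=0$ this forces every indecomposable projective $P_j$ with $j\in T_i$ to have $J^2P_j$ ``used'' by $N_{i+1}$. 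This yields monotonicity relations of the form $T_{i}\subseteq T_{i+1}\cup(\dots)$, together with a bounded number of possible transitions. A pigeonhole argument on subsets of the $s$ vertices, where each of three kinds of change can occur at most $s$ times, should give an index $i\le 3s$ with $T_{i}$ and $T_{i+1}$ overlapping in a controlled way. Concretely, I expect a simple $S$ with $S\subseteq\operatorname{top}N_{i+1}$ and $S\subseteq\soc N$ lying \emph{outside} $JN$, or lying in $JN$ but not hit by the radical of any map from $P(N_{i})$.

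\textbf{Producing the extension.} With such an index, I would take $q=i+1\le 3s+1$. I would define $f\colon N_{q}\to N$ as the composite $N_q\twoheadrightarrow S\hookrightarrow N$. This map is nonzero and kills $JN_q$. The remaining task is to show $f$ does not factor through the inclusion $N_q\subseteq P(N_{q-1})$. Any extension $g\colon P(N_{q-1})\to N$ maps $JP(N_{q-1})\supseteq N_q$ into $JN$. By construction, the image of $g$ restricted to $\operatorname{top}N_q$ can then only reach $S$ if $S$ occurs in $JN$ in the specific way excluded by the choice of $i$. Hence $0\ne\underline f\in\stableHom(\Omega^qN,N)\cong\Ext^q_A(N,N)\cong\Ext^q_A(M,M)$.

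\textbf{Main obstacle.} The hardest step will be the combinatorial core. There I must extract, from $\Ext^{>0}(N_i,A)=0$, precise enough constraints on the sequences $(T_i,R_i)$ so that the ``good'' index provably appears by step $3s$. I would first try to attach to each $i$ a triple of subsets of the vertex set that is monotone under $i\mapsto i+1$ unless a good index occurs. Monotone chains of subsets of an $s$-element set have length at most $s$ each, which is where $3s+1$ should come from. The split hypothesis is used so that simples correspond to vertices and $\operatorname{Hom}$ between simples is one-dimensional, making the counting and the construction of $f$ clean.
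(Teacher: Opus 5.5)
There is a genuine gap, and it goes beyond the combinatorial core being left as an expectation. The monotone triple of subsets and the ``three kinds of change, each at most $s$ times'' are reverse-engineered from the number $3s+1$. The one concrete consequence you draw from $\Ext^1_A(N_i,A)=0$ is empty: $J^2P(N_i)\subseteq N_{i+1}$ already holds because $J^2N_i=0$.

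More seriously, your witness is always a map $N_q\twoheadrightarrow S\hookrightarrow N$ through a simple module, and such maps need not exist in any degree. Take $A=kQ_3/R^3$ from \cref{ex:cycle} and $M=S_1$. Then $N=\Omega_A M=\rad P_1$ is uniserial with top $S_2$ and socle $S_3$. Also $\Omega_A N\cong\soc P_2\cong S_1$ and $\Omega_A S_1\cong N$, so $N_q$ alternates between $S_1$ and $N$. Every $\operatorname{top}N_q$ lies in $\{S_1,S_2\}$ while $\soc N=S_3$, so your ``good index'' never occurs. Nevertheless $\Ext_A^2(M,M)\cong\Ext_A^2(N,N)\cong\stableHom_A(N,N)\neq0$. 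Since $\End_A(N)=k$, the only witness is the identity of $N$, which does not factor through a simple. You could escape here by testing against $\Omega_A^2M$ instead, but nothing in your plan selects such a syzygy or shows that one always exists. So the final step, as designed, cannot detect the extension.

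The missing idea is a non-constructive detection mechanism that also sees identity-type witnesses. The paper takes indecomposable non-projective summands $X_{i+1}\mid\Omega_A X_i$ of successive syzygies. Vanishing in degrees $1,\dots,L$ forces three things: $\stableHom_A(X_j,X_i)=0=\Ext_A^1(X_j,X_i)$ for $j>i$, $\Ext^1_A(X_i,X_i)=0$, and pairwise non-isomorphism. Pairwise non-isomorphism is exactly what fails in the example above, where $X_2\cong X_0$. The $X_i$ are $B=A/J^2$-modules, the vanishing passes to $B$, and at most $s$ of them are $B$-projective. The functor $F$ to the separated algebra $H$, which is hereditary with $2s$ simples, turns the remaining $\ge L-s$ terms into indecomposable non-projective modules $Y_a$. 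These satisfy $\Ext^1_H(Y_b,Y_a)=0$ for $b\ge a$ and $\Hom_H(Y_b,Y_a)=0$ for $b>a$. Their Euler-form Gram matrix is then triangular with positive diagonal, so its rank forces $L-s\le 2s$. This is where $3s+1=s+2s+1$ comes from.
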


The bound $3s+1$ is the upper bound obtained from our argument and
is not claimed to be optimal. Ringel and Zhang \cite{RZ22} obtain the
sharper bound $1$ when $s=1$. Initial vanishing is essentially different
from eventual vanishing, as illustrated by the examples in
Section~\ref{sec:main-proof}.

The main theorem immediately yields the following form of the Auslander--Reiten conjecture.

\begin{corollary}\label{cor:ARC}
Every split finite-dimensional algebra with radical cube zero
satisfies the Auslander--Reiten conjecture. In particular, this holds
for every finite-dimensional algebra over an algebraically closed
field with radical cube zero.
\end{corollary}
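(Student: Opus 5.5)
The statement to prove is Corollary~\ref{cor:ARC}, and I would obtain it directly from Theorem~\ref{thm:main}.

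Let $A$ be a split finite-dimensional algebra with $J^3=0$. Suppose $M$ is a finitely generated module satisfying \eqref{eq:ARC}, that is, $\Ext_A^i(M,M\oplus A)=0$ for all $i>0$. Since $\Ext$ is additive in the second variable, this gives two separate facts. First, $\Ext_A^i(M,A)=0$ for all $i>0$, so $M\in\perpA$. Second, $\Ext_A^i(M,M)=0$ for all $i>0$; in particular this holds for $1\le i\le 3s+1$, where $s$ is the number of simple modules. So $M$ satisfies condition (ii) of Theorem~\ref{thm:main}, and the implication (ii)$\Rightarrow$(i) shows that $M$ is projective.

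The same argument covers the generator formulation. If $G$ is a self-orthogonal generator, then $A\in\add G$. Hence $\Ext_A^i(G,A)$ is a direct summand of a finite direct sum of copies of $\Ext_A^i(G,G)=0$, so it vanishes for all $i>0$. Thus $G$ satisfies \eqref{eq:ARC} and is therefore projective.

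For the final sentence, every finite-dimensional algebra over an algebraically closed field $k$ is split, as noted in the introduction. The reason is that the endomorphism ring of a simple module is a finite-dimensional division algebra over $k$, and any such division algebra equals $k$. So the first part applies.

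There is no real obstacle here. The only point to make explicit is that Theorem~\ref{thm:main} requires self-extension vanishing only in the finite range $1\le q\le 3s+1$, and the hypothesis \eqref{eq:ARC} gives vanishing in every positive degree, which is stronger.
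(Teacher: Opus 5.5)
Your proposal is correct and is essentially the paper's proof: by additivity, condition \eqref{eq:ARC} gives both $M\in\perpA$ and the vanishing of all positive self-extensions, which in particular yields condition (ii) of \cref{thm:main}. Your added remarks also agree with the paper: the generator formulation matches \cref{cor:generator}, and the splitness of algebras over an algebraically closed field is noted in the introduction.
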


\begin{proof}
Condition \eqref{eq:ARC} implies that $M\in\perpA$ and that
\cref{thm:main}\textnormal{(ii)} holds.
\end{proof}

The paper is organized as follows. In Section~2, we recall the
homological properties of syzygies and orthogonal modules. In Section~3,
we study extensions by passing to the separated algebra. In Section~4,
we prove the main theorem and give its consequences and examples.

All modules in this paper are finitely generated left modules. For
a projective cover $P_X\twoheadrightarrow X$, we write
$\Omega_A X=\ker(P_X\to X)$ and call it the first \emph{syzygy} of
$X$. Higher syzygies are defined inductively. We write
$\stableHom_A(X,Y)$ for the quotient of $\Hom_A(X,Y)$ by the subgroup
of morphisms that factor through projective modules. The notation
$U\mid V$ means that $U$ is isomorphic to a direct summand of $V$.
Auslander, Reiten and Smal\o{} \cite{ARS} give standard background on
projective covers, stable categories and separated algebras.

\section{Syzygies and finite orthogonal sequences}

In this section, we collect the properties of syzygies and orthogonal modules that will be used later.

The first two lemmas hold over any finite-dimensional algebra.

\begin{lemma}\label{lem:syzygy}
Let $X\in\perpA$. Then $\Omega_A^nX\in\perpA$ for all $n\ge0$, and
\[
 \Ext_A^q(\Omega_A^nX,\Omega_A^nX)
 \cong \Ext_A^q(X,X)\quad(q\ge1).
\]
Consequently, any specified vanishing range of positive
self-extensions is inherited by all syzygies and their direct
summands. If $X$ is non-projective, then $\Omega_A X$ has a
non-projective indecomposable direct summand.
\end{lemma}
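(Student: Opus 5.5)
By induction on $n$ it suffices to treat $n=1$. Throughout, write
\[
 0\to\Omega_A X\to P\to X\to0
\]
for the syzygy sequence, with $P\to X$ a projective cover.

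\textbf{Orthogonality.} Apply $\Hom_A(-,A)$ to the syzygy sequence. The long exact sequence gives $\Ext^q_A(\Omega_A X,A)\cong\Ext^{q+1}_A(X,A)=0$ for $q\ge1$, because $P$ is projective. Hence $\Omega_A X\in\perpA$, and induction gives $\Omega^n_A X\in\perpA$ for all $n$.

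\textbf{Isomorphism of self-extensions.} I will use dimension shifting in two steps.
\begin{enumerate}[label=(\alph*)]
 \item Apply $\Hom_A(X,-)$ to the syzygy sequence. Since $P\in\add A$ and $X\in\perpA$, we have $\Ext^q_A(X,P)=0$ for $q\ge1$. This yields $\Ext^q_A(X,X)\cong\Ext^{q+1}_A(X,\Omega_A X)$ for $q\ge1$.
 \item Apply $\Hom_A(-,\Omega_A X)$ to the syzygy sequence. Projectivity of $P$ gives $\Ext^{q+1}_A(X,\Omega_A X)\cong\Ext^q_A(\Omega_A X,\Omega_A X)$ for $q\ge1$.
\end{enumerate}
Composing the two isomorphisms gives $\Ext^q_A(\Omega_A X,\Omega_A X)\cong\Ext^q_A(X,X)$ for all $q\ge1$. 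Iterating with $\Omega_A X\in\perpA$ in place of $X$ gives the claim for all $n$.

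\textbf{Inheritance by direct summands.} Let $Y$ be a direct summand of $\Omega^n_A X$. Since $\Ext$ is additive in both arguments, $\Ext^q_A(Y,Y)$ is a direct summand of $\Ext^q_A(\Omega^n_AX,\Omega^n_AX)$. So vanishing of the latter in a given range of degrees forces vanishing of the former in the same range. Likewise $Y\in\perpA$, because $\perpA$ is closed under direct summands.

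\textbf{Non-projective summand of the syzygy.} Suppose instead that $\Omega_A X$ is projective. Then the syzygy sequence is a projective resolution of $X$ of length one, so $\Ext^1_A(X,\Omega_A X)$ is the cokernel of $\Hom_A(P,\Omega_A X)\to\Hom_A(\Omega_A X,\Omega_A X)$. But $\Ext^1_A(X,\Omega_A X)=0$, because $\Omega_A X\in\add A$ and $X\in\perpA$. Hence the identity of $\Omega_A X$ extends to $P$, the sequence splits, and $X$ is a direct summand of $P$, hence projective. This contradicts the hypothesis. So $\Omega_A X$ is not projective, and by Krull--Schmidt one of its indecomposable summands is non-projective.

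\textbf{Main point of care.} The only step needing attention is the degree bookkeeping in (a): it is exactly the vanishing $\Ext^{q}_A(X,P)=0$ for all $q\ge1$, supplied by $X\in\perpA$, that makes the shift valid in every degree $q\ge1$, including the lowest one. Everything else is formal.
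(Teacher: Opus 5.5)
Your proof is correct and follows essentially the same route as the paper. You use the same two dimension shifts, only listed in the opposite order: first in the second variable using $\Ext_A^{>0}(X,P)=0$, then in the first variable. You then use additivity of $\Ext$ for direct summands, and split the projective cover sequence via $\Ext_A^1(X,\Omega_A X)=0$ before invoking Krull--Schmidt.
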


\begin{proof}
Membership in $\perpA$ follows by dimension shifting. Write
$X_1=\Omega_A X$ and choose a projective cover sequence
\[
 0\longrightarrow X_1\longrightarrow P\longrightarrow X
 \longrightarrow0.
\]
For $q\ge1$, dimension shifting in the first variable gives
\[
 \Ext_A^q(X_1,X_1)\cong\Ext_A^{q+1}(X,X_1).
\]
Since $\Ext_A^{>0}(X,P)=0$, the long exact sequence in the second
variable identifies the group on the right with $\Ext_A^q(X,X)$.
Iteration proves the formula. Additivity of $\Ext$ proves the
assertion about direct summands.

If $\Omega_A X$ were projective, then
$\Ext_A^1(X,\Omega_A X)=0$, since $X\in\perpA$. The projective cover
sequence would split, making $X$ projective. Krull--Schmidt now gives
the last assertion.
\end{proof}

\begin{lemma}\label{lem:stable-ext}
For $X\in\perpA$, any module $Y$, and $n\ge1$, there is an
isomorphism
\[
 \Ext_A^n(X,Y)\cong\stableHom_A(\Omega_A^nX,Y).
\]
\end{lemma}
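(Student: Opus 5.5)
The plan is to proceed by induction on $n$, reducing everything to the case $n=1$ via dimension shifting in the first variable.

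\textbf{Case $n=1$.} Take the projective cover sequence
\[
0\to \Omega_A X\xrightarrow{\iota} P\to X\to 0 .
\]
Applying $\Hom_A(-,Y)$ gives the exact sequence
\[
\Hom_A(P,Y)\xrightarrow{\iota^*}\Hom_A(\Omega_AX,Y)\to\Ext_A^1(X,Y)\to 0 ,
\]
so $\Ext_A^1(X,Y)\cong\Hom_A(\Omega_AX,Y)/\operatorname{im}\iota^*$. It remains to show that $\operatorname{im}\iota^*$ equals the subgroup of morphisms $\Omega_AX\to Y$ that factor through a projective module.

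One inclusion is clear, since $P$ is projective. For the other, let $f=hg$ with $g\colon\Omega_AX\to Q$ and $Q$ projective. Because $X\in\perpA$ and $Q\in\add A$, we have $\Ext_A^1(X,Q)=0$. Hence the restriction map $\Hom_A(P,Q)\to\Hom_A(\Omega_AX,Q)$ is surjective, so $g=g'\iota$ for some $g'\colon P\to Q$. Then $f=(hg')\iota\in\operatorname{im}\iota^*$.

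\textbf{Inductive step, $n\ge2$.} Dimension shifting along the same sequence gives $\Ext_A^n(X,Y)\cong\Ext_A^{n-1}(\Omega_AX,Y)$, because $\Ext_A^{i}(P,Y)=0$ for $i\ge1$. By \cref{lem:syzygy}, $\Omega_AX\in\perpA$, so the induction hypothesis applies to $\Omega_AX$ and yields $\stableHom_A(\Omega_A^{n-1}\Omega_AX,Y)=\stableHom_A(\Omega_A^nX,Y)$.

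\textbf{Main point.} The only real content is the identification, in the case $n=1$, of the morphisms factoring through projectives with those factoring through the cover $P$. This is exactly where the hypothesis $X\in\perpA$ is used, via $\Ext_A^1(X,Q)=0$. Without that hypothesis the statement fails in general: one only gets a quotient of $\Ext_A^1(X,Y)$.
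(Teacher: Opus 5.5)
Your proposal is correct and matches the paper's proof: you reduce to $n=1$ using that syzygies of $X$ lie in $\perpA$, then identify the image of $\Hom_A(P,Y)$ with the maps factoring through projectives, using $\Ext_A^1(X,Q)=0$ to extend across the cover. The only difference is that you write out the dimension-shifting induction step explicitly, whereas the paper leaves it implicit.
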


\begin{proof}
It suffices to treat $n=1$, since syzygies of $X$ belong to $\perpA$.
For a projective cover sequence
$0\to\Omega_A X\xrightarrow{u}P\to X\to0$, one has
\[
 \Ext_A^1(X,Y)\cong
 \Hom_A(\Omega_A X,Y)/\operatorname{Im}\Hom_A(P,Y).
\]
The image in the denominator consists of all maps factoring through
projectives. Indeed, if $\Omega_A X\xrightarrow{a}Q\xrightarrow{b}Y$
is such a factorization, with $Q$ projective, then
$\Ext_A^1(X,Q)=0$ allows $a$ to extend across $u$. Thus $ba$ lies in
the displayed image. The reverse inclusion is immediate.
\end{proof}

\begin{proposition}\label{prop:chain}
Let $L\ge1$ and let $M\in\perpA$ be non-projective. Assume that
$\Ext_A^q(M,M)=0$ for $1\le q\le L$. There exist indecomposable
non-projective modules $X_0,\ldots,X_{L-1}$ such that:
\begin{enumerate}[label=\textnormal{(\roman*)},leftmargin=2.2em]
 \item {$X_0\mid\Omega_A M$ and $X_{i+1}\mid\Omega_A X_i$.}
 \item {$X_i\in\perpA$ and $\Ext_A^q(X_i,X_i)=0$ for $1\le q\le L$.}
 \item if $j>i$, then
 \[
 \stableHom_A(X_j,X_i)=0=\Ext_A^1(X_j,X_i).
 \]
 \item the $X_i$ are pairwise non-isomorphic.
\end{enumerate}
If $J^3=0$, then $J^2X_i=0$ for every $i$.
\end{proposition}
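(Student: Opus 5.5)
The modules $X_i$ will be built inductively from \cref{lem:syzygy}, and properties (ii)--(iv) will then follow from the dimension-shifting isomorphisms already established together with the Krull--Schmidt property. The only numerical point to watch is that all the $\Ext$-degrees that occur stay at most $L$.

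\emph{Construction.} Since $M\in\perpA$ is non-projective, the last assertion of \cref{lem:syzygy} gives an indecomposable non-projective summand $X_0\mid\Omega_A M$. Suppose $X_i$ has been constructed, is indecomposable and non-projective, and is a direct summand of $\Omega_A^{i+1}M$. Then $X_i\in\perpA$ by \cref{lem:syzygy} and additivity of $\Ext$. Applying \cref{lem:syzygy} again yields an indecomposable non-projective $X_{i+1}\mid\Omega_A X_i$. This summand also divides $\Omega_A^{i+2}M$, so the induction continues. This gives (i) and the membership part of (ii). For the vanishing in (ii), write $\Omega_A^{i+1}M=X_i\oplus Z$. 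Then $\Ext_A^q(X_i,X_i)$ is a direct summand of $\Ext_A^q(\Omega_A^{i+1}M,\Omega_A^{i+1}M)$, which is isomorphic to $\Ext_A^q(M,M)$ by \cref{lem:syzygy}, and this group vanishes for $1\le q\le L$.

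\emph{Condition (iii).} Fix $j>i$ and put $d=j-i$, so $1\le d\le L-1$. By (i) we have $X_j\mid\Omega_A^dX_i$. Since $X_i\in\perpA$, \cref{lem:stable-ext} gives
\[
\stableHom_A(\Omega_A^dX_i,X_i)\cong\Ext_A^d(X_i,X_i)=0 ,
\]
because $d\le L$. Stable $\Hom$ is additive, so $\stableHom_A(X_j,X_i)=0$. For the $\Ext^1$ statement, $\Ext_A^1(X_j,X_i)$ is a direct summand of $\Ext_A^1(\Omega_A^dX_i,X_i)$. Dimension shifting in the first variable identifies the latter with $\Ext_A^{d+1}(X_i,X_i)$, which vanishes since $d+1\le L$. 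The bound $d\le L-1$ is exactly why only $L$ modules are produced.

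\emph{Condition (iv) and the last claim.} If $X_j\cong X_i$ with $j>i$, then (iii) gives $\stableHom_A(X_i,X_i)=0$. Hence $\mathrm{id}_{X_i}$ factors through a projective module, so $X_i$ is a direct summand of a projective module. This contradicts the non-projectivity of $X_i$. Finally, suppose $J^3=0$. Each $X_i$ is a direct summand of $\Omega_A Y$ for some module $Y$ (namely $Y=M$ or $Y=X_{i-1}$). Because a projective cover $P\to Y$ is a radical surjection, $\Omega_A Y\subseteq JP$. Therefore $J^2\Omega_AY\subseteq J^3P=0$, and hence $J^2X_i=0$.

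I do not expect any step to be a real obstacle. The only care needed is the degree bookkeeping in (iii), which requires $d+1\le L$ and is guaranteed by stopping at $X_{L-1}$.
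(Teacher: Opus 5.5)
Your proposal is correct and follows the paper's proof essentially step for step. The summands in (i) and (ii) come from \cref{lem:syzygy} and additivity, and (iii) comes from $X_j\mid\Omega_A^{j-i}X_i$ together with \cref{lem:stable-ext} and dimension shifting, using the same bookkeeping $j-i+1\le L$. For (iv), the identity factors through a projective, and the final claim follows from $\Omega_A Z\subseteq JP$.
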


\begin{proof}
The non-projective summands required in (i) exist by
\cref{lem:syzygy}. The same lemma and additivity of $\Ext$ give (ii).
Projective covers, and hence their syzygies, are additive up to
isomorphism. Therefore
\[
 X_j\mid\Omega_A^{j-i}X_i\quad(j>i).
\]
Using \cref{lem:stable-ext} and (ii), we obtain
\[
 \stableHom_A(\Omega_A^{j-i}X_i,X_i)
 \cong\Ext_A^{j-i}(X_i,X_i)=0.
\]
Also,
\[
 \Ext_A^1(\Omega_A^{j-i}X_i,X_i)
 \cong\Ext_A^{j-i+1}(X_i,X_i)=0,
\]
because $j-i+1\le L$. Passing to direct summands proves (iii).
If $X_j\cong X_i$ for $j>i$, then (iii) forces the identity of $X_i$
to factor through a projective. This would make $X_i$ projective,
proving (iv).

Finally, $\Omega_A Z\subseteq JP$ for a projective cover $P\to Z$.
Thus $J^2\Omega_A Z\subseteq J^3P=0$ when $J^3=0$. Each $X_i$ is a
summand of a positive syzygy, giving the final assertion.
\end{proof}

\section{Extensions over the separated algebra}

In this section, we study extension groups after passing from an algebra with radical square zero to its separated algebra.

We first record how orthogonality passes to a quotient algebra.

\begin{lemma}\label{lem:quotient}
Let $I$ be a two-sided ideal of $A$, put $B=A/I$, and let $X,Y$ be
$B$-modules. Regard a $B$-module as an $A$-module through the quotient map
$A\twoheadrightarrow B$. This restriction of scalars, often called
inflation, induces an injection
\[
 \Ext_B^1(X,Y)\hookrightarrow\Ext_A^1(X,Y).
\]
Moreover, $\stableHom_A(X,Y)=0$ implies
$\stableHom_B(X,Y)=0$.
\end{lemma}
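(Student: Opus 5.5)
The proof has two independent parts: the injectivity on $\Ext^1$, which I will get from the Yoneda description of extensions, and the statement about stable Hom, which I will get from projective covers over $B$. No earlier result of the paper is needed beyond these standard facts.

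\emph{Injectivity on $\Ext^1$.} A $B$-module is exactly an $A$-module annihilated by $I$. Any short exact sequence $0\to Y\to E\to X\to0$ of $B$-modules is also a short exact sequence of $A$-modules, so inflation defines a map $\Ext_B^1(X,Y)\to\Ext_A^1(X,Y)$ on Yoneda classes. This map is additive, since Baer sums are computed by pullbacks and pushouts. Those constructions are the same over $A$ and over $B$ because the full subcategory of $B$-modules is closed under submodules, quotients and finite direct sums. For injectivity, suppose the sequence splits over $A$, with an $A$-linear section $\sigma\colon X\to E$. Every $A$-linear map between $B$-modules is $B$-linear, because the $B$-action is induced from the $A$-action through the surjection $A\twoheadrightarrow B$. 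Hence $\sigma$ is $B$-linear and the sequence splits over $B$. So the kernel is zero. (Alternatively one can use the five-term exact sequence of the change-of-rings spectral sequence, but the direct argument is cleaner.)

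\emph{Stable Hom.} Assume $\stableHom_A(X,Y)=0$ and let $f\colon X\to Y$ be $B$-linear. Then $f$ is $A$-linear, so it factors as $X\xrightarrow{a}Q\xrightarrow{b}Y$ with $Q$ a projective $A$-module. Since $IY=0$, the map $b$ vanishes on $IQ$ and factors through $Q/IQ\cong B\otimes_AQ$, which is a projective $B$-module. I then need the first map to land in $Q/IQ$. Composing $a$ with the projection $Q\to Q/IQ$ gives a map $\bar a\colon X\to Q/IQ$, and $f=\bar b\,\bar a$, where $\bar b\colon Q/IQ\to Y$ is induced by $b$. Both $\bar a$ and $\bar b$ are $A$-linear maps between $B$-modules, hence $B$-linear. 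Therefore $f$ factors through a projective $B$-module, and $\stableHom_B(X,Y)=0$.

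The only point requiring care is that ``$A$-linear implies $B$-linear'' for maps between $B$-modules, which holds because $B$ is a quotient of $A$. No step is a real obstacle.
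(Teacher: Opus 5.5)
Your proposal is correct and follows the paper's proof: an $A$-linear splitting of a sequence of $B$-modules is automatically $B$-linear, and a factorization of $f$ through an $A$-projective $Q$ descends to the $B$-projective $Q/IQ$ because $IY=0$. Your additional remarks, that the map is additive via Baer sums and that $f$ factors as the composite of $Q\to Q/IQ$ with $a$, followed by $\bar b$, only make explicit steps the paper leaves implicit.
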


\begin{proof}
A short exact sequence of $B$-modules that splits over $A$ also
splits over $B$, since an $A$-linear map between $B$-modules is
$B$-linear. This proves the first assertion.

For the second, factor an arbitrary map $f:X\to Y$ as
$X\to P\xrightarrow{b}Y$, where $P$ is $A$-projective. Since $IY=0$,
the map $b$ annihilates $IP$, so the factorization passes through
$P/IP$. This module is projective over $B$, being a direct summand
of a finite direct sum of copies of $B$.
\end{proof}

For the remainder of this section, $B$ is a basic split
finite-dimensional algebra with radical $\mathfrak r$ satisfying
$\mathfrak r^2=0$. For a $B$-module $X$, we write
$\mathfrak rX$ for its radical, $X/\mathfrak rX$ for its top, and
$\soc_BX$ for its socle. Choose primitive orthogonal idempotents
$e_1,\ldots,e_s$, and set $S=\bigoplus_{i=1}^s ke_i$. Then
$B=S\oplus\mathfrak r$ and $S\cong B/\mathfrak r\cong k^s$.
The \emph{separated algebra} of $B$ is
\[
H=\begin{pmatrix}S&0\\ \mathfrak r&S\end{pmatrix}.
\]
An $H$-module is a triple $(U,V,\mu)$, where $U,V$ are $S$-modules
and $\mu:\mathfrak r\otimes_SU\to V$ is $S$-linear. Its
separated quiver has the vertices
$1^+,\cdots,s^+,1^-,\cdots,s^-$, with all arrows directed from a plus
vertex to a minus vertex. The algebra $H$ is the path algebra of this
quiver. Hence $H$ is hereditary. We write $K_0(H)$ for the Grothendieck
group of finite-dimensional $H$-modules. It is free of rank $2s$, with
the classes of the simple $H$-modules as a basis.

For a $B$-module $X$, multiplication defines the surjection
$\mu_X:\mathfrak r\otimes_S(X/\mathfrak rX)\to\mathfrak rX$. Set
\[
 F(X)=(X/\mathfrak rX,\mathfrak rX,\mu_X).
\]
On morphisms, $F$ takes the induced map on the top and the restriction
to the radical. This is the classical functor to the separated algebra considered
by Reiten \cite{Reiten75} and by Auslander, Reiten and Smal\o{}
\cite{ARS}. We record the properties needed below.

\begin{lemma}\label{lem:F}
The functor $F:B\text{-}\modu\to H\text{-}\modu$ is full, with
\[
 \ker\bigl(\Hom_B(X,Y)\longrightarrow\Hom_H(FX,FY)\bigr)
 \cong\Hom_S(X/\mathfrak rX,\mathfrak rY).
\]
It preserves indecomposability and projectivity and reflects
isomorphisms. For an indecomposable $X$, the module $FX$ is
projective if and only if $X$ is projective.
\end{lemma}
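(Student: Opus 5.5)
We verify the assertions of \cref{lem:F} directly from the description of morphisms of $H$-modules. A morphism $FX\to FY$ is a pair $(\alpha,\beta)$ with $\alpha:X/\mathfrak rX\to Y/\mathfrak rY$ and $\beta:\mathfrak rX\to\mathfrak rY$ both $S$-linear, satisfying $\beta\mu_X=\mu_Y(1\otimes\alpha)$.

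\emph{Fullness and the kernel.} Since $B=S\oplus\mathfrak r$ as $S$-bimodules, every $B$-module splits over $S$ as $X=X'\oplus\mathfrak rX$, with $X'\cong X/\mathfrak rX$ and $S$-linear. Given $(\alpha,\beta)$, define $f:X\to Y$ by $f(x'+z)=\tilde\alpha(x')+\beta(z)$, where $\tilde\alpha$ is $\alpha$ followed by the chosen section $Y/\mathfrak rY\cong Y'\subseteq Y$.

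Then $f$ is $S$-linear. For $r\in\mathfrak r$ we have $r\cdot\mathfrak rX=0$, so it suffices to check $f(rx')=rf(x')$. Here $rx'=\mu_X(r\otimes\bar x')$, and
\[
\beta(\mu_X(r\otimes\bar x'))=\mu_Y(r\otimes\alpha\bar x')=r\tilde\alpha(x'),
\]
because $r$ kills the radical part of any lift. Hence $f$ is $B$-linear and $Ff=(\alpha,\beta)$, so $F$ is full.

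The kernel consists of the $f$ inducing zero on tops and zero on radicals. Such an $f$ factors as
\[
X\to X/\mathfrak rX\to\mathfrak rY\hookrightarrow Y.
\]
Conversely, any $S$-map $g:X/\mathfrak rX\to\mathfrak rY$ gives a $B$-map this way, because $\mathfrak r$ kills both $\mathfrak rY$ and the image of $\mathfrak rX$. This yields the stated isomorphism $\ker\cong\Hom_S(X/\mathfrak rX,\mathfrak rY)$.

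\emph{Indecomposability and reflection of isomorphisms.} If $f\in\ker$, then $f^2=0$, since $f$ maps into $\mathfrak rY$ and vanishes on $\mathfrak rX$. So the kernel of $\End_B(X)\to\End_H(FX)$ is a nilpotent ideal (square zero). By fullness, $\End_H(FX)$ is the quotient of the local ring $\End_B(X)$ by this ideal. Hence $FX$ is indecomposable when $X$ is, noting $FX\neq0$ for $X\neq0$.

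For reflection of isomorphisms, suppose $Ff$ is an isomorphism. Then $f$ induces an isomorphism on tops, hence is surjective by Nakayama. It is also an isomorphism on radicals, so $\dim X=\dim Y$. Therefore $f$ is an isomorphism.

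\emph{Projectivity.} Take $Be_i$. Its top is $S_i$ and its radical is $\mathfrak re_i$. The map $\mu:\mathfrak r\otimes_S ke_i\to\mathfrak re_i$ is the isomorphism $\mathfrak r\otimes_S Se_i\cong\mathfrak re_i$. A triple $(U,V,\mu)$ with $\mu$ an isomorphism $\mathfrak r\otimes U\to V$ is exactly the projective $H$-module $He_{i^+}$ (or a sum of these). So $F$ preserves projectivity, and $F$ is additive.

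For the converse, let $X$ be indecomposable with $FX$ projective. The indecomposable projective $H$-modules are the $He_{i^+}$ and the simples at minus vertices, $(0,S_i,0)$. Since $FX$ has nonzero top component $X/\mathfrak rX\neq0$, we get $FX\cong He_{i^+}\cong F(Be_i)$. Because $F$ is full, this isomorphism lifts to a map $Be_i\to X$. That map is an isomorphism by the reflection argument above.

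\emph{Main obstacle.} The one point needing care is fullness, specifically the verification that $f(rx')=rf(x')$. This is handled by the splitting $B=S\oplus\mathfrak r$ together with $\mathfrak r^2=0$, and it is where splitness is used: $S$ is a separable subalgebra complementing the radical.
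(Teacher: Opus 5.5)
Your proof is correct and follows essentially the paper's route: the lift via an $S$-splitting, the square-zero kernel ideal that makes $\End_H(FX)$ local, the identification of $F(Be_i)$ with the projective at $i^+$, and the nonzero-top argument for the converse. The only variation is in reflecting isomorphisms. You show that $Ff$ invertible forces $f$ invertible via Nakayama and a dimension count, then combine this with fullness. The paper instead lifts an isomorphism and its inverse and uses $\mathcal I^2=0$ to see that $gf$ and $fg$ are invertible. Both arguments work.
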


\begin{proof}
Choose $S$-module splittings $X=T_X\oplus\mathfrak rX$ and
$Y=T_Y\oplus\mathfrak rY$, identifying $T_X,T_Y$ with the tops.
An $H$-map $FX\to FY$ is a pair $(\alpha,\beta)$ satisfying
\[
 \beta\mu_X=\mu_Y(1_{\mathfrak r}\otimes\alpha).
\]
The map $t+u\mapsto\alpha(t)+\beta(u)$ is $B$-linear and lifts the
pair. A map lies in the kernel of $F$ exactly when its image is
contained in $\mathfrak rY$. It then annihilates $\mathfrak rX$ and
factors uniquely through the indicated $S$-map. These kernels form
an ideal $\mathcal I$ with $\mathcal I^2=0$.

If $P_i=Be_i$, multiplication identifies
$\mathfrak r\otimes_S Se_i$ with $\mathfrak rP_i$. Thus $F(P_i)$ is
the indecomposable projective of $H$ at $i^+$, and $F$ preserves
projectives. Fullness gives
\[
 \End_H(FX)\cong\End_B(X)/\mathcal I(X,X).
\]
For indecomposable $X$ this is a local ring, so $FX$ is
indecomposable. If $FX\cong FY$, lift an isomorphism and its inverse
to $f:X\to Y$ and $g:Y\to X$. Then $gf-1_X$ and $fg-1_Y$ belong
to $\mathcal I$. Since $\mathcal I^2=0$, both $gf$ and $fg$ are
invertible, whence $f$ is an isomorphism.

Finally, a non-zero $X$ has non-zero top. If $X$ is indecomposable
and $FX$ is projective, then $FX$ must be isomorphic to some
$F(P_i)$. Indeed, the other indecomposable projective $H$-modules have
zero first component. Reflection of isomorphisms gives $X\cong P_i$.
\end{proof}

The next comparison does not require either module to be
indecomposable or non-simple. Its correction term specifies exactly
when the extension spaces have equal dimension.

\begin{proposition}\label{prop:comparison}
Let $X,Y$ be $B$-modules and fix a projective cover sequence
\[
 0\longrightarrow K_X\longrightarrow P_X\xrightarrow{p_X}X
 \longrightarrow0.
\]
There is a short exact sequence of $k$-vector spaces
\begin{equation}\label{eq:comparison}
 0\longrightarrow\Ext_H^1(FX,FY)
 \longrightarrow\Ext_B^1(X,Y)
 \longrightarrow\Hom_S(K_X,\soc_B Y/\mathfrak rY)
 \longrightarrow0.
\end{equation}
In particular,
\begin{equation}\label{eq:ext-implication}
 \Ext_B^1(X,Y)=0\quad\Longrightarrow\quad\Ext_H^1(FX,FY)=0.
\end{equation}
If $\soc_B Y=\mathfrak rY$, the first map in
\eqref{eq:comparison} is an isomorphism.
\end{proposition}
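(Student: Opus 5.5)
We compute $\Ext_B^1(X,Y)$ as the cokernel of $\Hom_B(P_X,Y)\to\Hom_B(K_X,Y)$, and $\Ext_H^1(FX,FY)$ from the projective resolution of $FX$ over the hereditary algebra $H$. Since $\mathfrak r^2=0$, we have $K_X\subseteq\mathfrak rP_X$, so $\mathfrak rK_X=0$ and $K_X$ is semisimple. Every $B$-map $K_X\to Y$ therefore lands in $\soc_BY$. So $\Hom_B(K_X,Y)=\Hom_S(K_X,\soc_BY)$.

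Next we identify the $H$-side. The projective cover of $FX$ is $F(P_X)\to FX$: its first component is the iso $P_X/\mathfrak rP_X\to X/\mathfrak rX$, and its second component is the surjection $\mathfrak rP_X\to\mathfrak rX$, whose kernel is $K_X$. Its kernel is therefore $(0,K_X,0)$, a semisimple projective $H$-module supported on minus vertices. Thus
\[
0\to(0,K_X,0)\to F(P_X)\to FX\to0
\]
is a projective resolution. This gives $\Ext_H^1(FX,FY)$ as the cokernel of $\Hom_H(FP_X,FY)\to\Hom_H((0,K_X,0),FY)=\Hom_S(K_X,\mathfrak rY)$.

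We then compare the two cokernels. Consider the short exact sequence of $S$-modules
\[
0\to\Hom_S(K_X,\mathfrak rY)\to\Hom_S(K_X,\soc_BY)\to\Hom_S(K_X,\soc_BY/\mathfrak rY)\to0,
\]
which is exact since $S$ is semisimple and $\mathfrak rY\subseteq\soc_BY$. We check that the images of the restriction maps coincide. Any $B$-map $f:P_X\to Y$ sends $\mathfrak rP_X$ into $\mathfrak rY$, so $f|_{K_X}$ lies in $\Hom_S(K_X,\mathfrak rY)$. By fullness (\cref{lem:F}), the maps $F(P_X)\to FY$ are exactly the pairs induced by $B$-maps $P_X\to Y$. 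Their restrictions to $K_X$ are the same maps $f|_{K_X}$. Hence both restriction images are the same subspace $R\subseteq\Hom_S(K_X,\mathfrak rY)$.

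Quotienting the displayed sequence by $R$ then yields \eqref{eq:comparison}. Its first map is $\Hom_S(K_X,\mathfrak rY)/R\to\Hom_S(K_X,\soc_BY)/R$, which we should check agrees with the natural map on $\Ext^1$; this is a routine naturality check via pullback of extensions. The implication \eqref{eq:ext-implication} and the final isomorphism claim are then immediate. The main care point is verifying that the restriction images coincide. This uses $f(\mathfrak rP_X)\subseteq\mathfrak rY$ together with fullness of $F$, and it is the only nontrivial step.
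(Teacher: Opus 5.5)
Your proof is correct and follows the paper's argument essentially step for step: the same $H$-presentation $0\to(0,K_X,0)\to F(P_X)\to FX\to0$, the cokernel descriptions $\Hom_S(K_X,\soc_BY)/\mathcal R$ and $\Hom_S(K_X,\mathfrak rY)/\mathcal R$ with a common image $\mathcal R$ obtained from fullness of $F$, and the quotient identified using semisimplicity of $S$. Two minor remarks: the inclusion $K_X\subseteq\mathfrak rP_X$ comes from minimality of the projective cover, and $\mathfrak r^2=0$ is what then makes $K_X$ semisimple; also, the naturality check you defer is not needed, since the statement only asserts the existence of such a sequence and the paper simply takes the induced inclusion of quotients as the first map.
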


\begin{proof}
The kernel $K_X\subseteq\mathfrak rP_X$ is semisimple. Since $p_X$
induces an isomorphism on tops and a surjection on radicals, there
is an exact sequence of $H$-modules
\begin{equation}\label{eq:H-cover}
 0\longrightarrow(0,K_X,0)\longrightarrow F(P_X)
 \longrightarrow FX\longrightarrow0.
\end{equation}
Here $F(P_X)$ is projective by \cref{lem:F}. Notice that the kernel
in \eqref{eq:H-cover} is $(0,K_X,0)$, whereas $F(K_X)=(K_X,0,0)$.
In particular, \eqref{eq:H-cover} is not obtained by assuming that
$F$ is exact.

Applying $\Hom_B(-,Y)$ to the projective cover of $X$ yields
\begin{equation}\label{eq:B-coker}
 \Ext_B^1(X,Y)\cong
 \Hom_S(K_X,\soc_B Y)/\mathcal R,
\end{equation}
where $\mathcal R$ is the image of restriction from
$\Hom_B(P_X,Y)$. Every such restriction has image in
$\mathfrak rY$, because $K_X\subseteq\mathfrak rP_X$.
Applying $\Hom_H(-,FY)$ to \eqref{eq:H-cover} gives
\begin{equation}\label{eq:H-coker}
 \Ext_H^1(FX,FY)\cong
 \Hom_S(K_X,\mathfrak rY)/\mathcal R.
\end{equation}
The image is the same $\mathcal R$ in both formulas. Fullness lifts
each map $FP_X\to FY$ to a map $P_X\to Y$. Restricting the
radical component to $K_X$ gives the same map.

Thus \eqref{eq:H-coker} is a subspace of \eqref{eq:B-coker}.
Their quotient is $\Hom_S(K_X,\soc_B Y/\mathfrak rY)$, because $S$
is semisimple. This proves \eqref{eq:comparison} and its consequences.
\end{proof}

\begin{remark}\label{rem:dimension}
For an indecomposable non-simple $Y$, one has
$\soc_B Y=\mathfrak rY$. Indeed, a simple submodule of the socle
disjoint from $\mathfrak rY$ would split off. To see this, choose a
complement to its image in the semisimple top and take the inverse image
in $Y$.
Consequently, \cref{prop:comparison} recovers the equality of
extension dimensions in this case.

More generally, let $t_X,r_X$ be the multiplicity vectors of the top
and radical of $X$, let $c_Y$ be that of $\soc_B Y/\mathfrak rY$,
and put $E_{ji}=[\mathfrak rP_i:S_j]$. Since $K_X$ has multiplicity
vector $Et_X-r_X$, formula \eqref{eq:comparison} gives
\[
\dim_k\Ext_B^1(X,Y)-\dim_k\Ext_H^1(FX,FY)
 =(Et_X-r_X)^Tc_Y.
\]
\end{remark}

\begin{example}\label{ex:nonexact}
Let $B=k[\varepsilon]/(\varepsilon^2)$ and $T=B/(\varepsilon)$.
The separated algebra is the path algebra of $1^+\to1^-$. Here
$F(T)=(k,0,0)$, and
\[
 \Ext_B^1(T,T)\cong k,
 \quad \Ext_H^1(FT,FT)=0.
\]
The correction term in \eqref{eq:comparison} is $k$. Moreover, $F$
sends the inclusion $T\cong\varepsilon B\hookrightarrow B$ to the
zero map. This exhibits both the failure of exactness of $F$ and the
need for a correction term when the target has a simple summand.
\end{example}

We will also use the following elementary hereditary facts.

\begin{lemma}\label{lem:projective-maps}
Let $H$ be a finite-dimensional hereditary algebra. If $U$ is
indecomposable and non-projective, then $\Hom_H(U,Q)=0$ for every
projective $Q$.
\end{lemma}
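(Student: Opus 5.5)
The plan is to show that the image of any map $f\colon U\to Q$ is projective, so that $f$ must vanish. Over a hereditary algebra, every submodule of a projective module is projective. Since $\operatorname{Im} f\subseteq Q$, the image is therefore projective. This closure property is the defining feature of hereditary algebras; it follows from the fact that $\Ext_H^2=0$, or equivalently that every left ideal is projective.

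Next, consider the surjection $U\twoheadrightarrow\operatorname{Im} f$. Because its target is projective, this surjection splits. Hence $\operatorname{Im} f$ is isomorphic to a direct summand of $U$.

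Since $U$ is indecomposable, this summand is either $0$ or all of $U$. In the second case $U\cong\operatorname{Im} f$ would be projective, contradicting the hypothesis. Therefore $\operatorname{Im} f=0$, that is, $f=0$. As $f$ and $Q$ were arbitrary, this gives $\Hom_H(U,Q)=0$ for every projective $Q$.

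The only substantive input is the fact that submodules of projective modules are projective over a hereditary algebra. This is standard (for example, \cite{ARS}), so I expect no real obstacle. The remaining steps are immediate from the splitting of surjections onto projective modules and from indecomposability, using finite-dimensionality to stay within finitely generated modules.
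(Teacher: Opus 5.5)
Your proposal is correct and follows essentially the same route as the paper: the image of $f$ is a submodule of a projective, hence projective by heredity, so the surjection $U\twoheadrightarrow\operatorname{Im} f$ splits, and indecomposability together with non-projectivity of $U$ forces $f=0$.
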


\begin{proof}
The image of a map $U\to Q$ is a submodule of a projective, hence
projective. If the image is non-zero, the epimorphism onto it splits.
Indecomposability would then make $U$ projective.
\end{proof}

\begin{corollary}\label{cor:hom-transfer}
Let $X$ be an indecomposable non-projective $B$-module. Then
\[
 \stableHom_B(X,Y)=0\quad\Longrightarrow\quad\Hom_H(FX,FY)=0.
\]
\end{corollary}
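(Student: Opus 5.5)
The plan is to show that every morphism $FX\to FY$ of $H$-modules is zero. We lift each such morphism along $F$, use the stable vanishing to factor the lift through a projective $B$-module, and then use \cref{lem:projective-maps} on the hereditary side.

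First, $FX$ is indecomposable and non-projective. Indeed, $X$ is indecomposable and non-projective, so \cref{lem:F} shows that $FX$ is indecomposable and, by its last assertion, not projective.

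Next, let $\phi\colon FX\to FY$ be an $H$-map. By fullness in \cref{lem:F}, $\phi=F(f)$ for some $f\in\Hom_B(X,Y)$. The hypothesis $\stableHom_B(X,Y)=0$ lets us write $f=b\circ a$ with $a\colon X\to P$, $b\colon P\to Y$ and $P$ a projective $B$-module. Since $F$ is a functor, $\phi=F(b)\circ F(a)$. Here $F(P)$ is projective over $H$ by \cref{lem:F}, and $H$ is hereditary.

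Finally, \cref{lem:projective-maps}, applied to the indecomposable non-projective $FX$, gives $F(a)=0$. Hence $\phi=0$, and so $\Hom_H(FX,FY)=0$.

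The only point that needs care is that $F$ is compatible with composition, so that the factorization through $P$ survives in $H$-$\modu$. This holds because $F$ is defined functorially on tops and radicals. No exactness of $F$ is needed, which is just as well, since \cref{ex:nonexact} shows that $F$ is not exact.
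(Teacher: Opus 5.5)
Your proposal is correct and is essentially the paper's argument. Like the paper, you lift by fullness of $F$, factor the lift through a projective $B$-module, apply $F$, and use \cref{lem:projective-maps} on the indecomposable non-projective $FX$ to kill the factorization.
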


\begin{proof}
Lift an $H$-map by fullness of $F$. The resulting $B$-map factors
through a $B$-projective, so its image under $F$ factors through an
$H$-projective. Since $FX$ is indecomposable and non-projective,
\cref{lem:projective-maps} makes this factorization zero.
\end{proof}

\begin{lemma}\label{lem:rank}
Let $H$ be a finite-dimensional hereditary algebra with $N$ simple
modules up to isomorphism. Suppose that non-zero modules
$Y_0,\ldots,Y_{r-1}$ satisfy
\[
 \Ext_H^1(Y_j,Y_i)=0~(j\ge i),
 \quad\Hom_H(Y_j,Y_i)=0~(j>i).
\]
Then $r\le N$.
\end{lemma}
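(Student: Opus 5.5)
The plan is to show that the classes $[Y_0],\ldots,[Y_{r-1}]$ in $K_0(H)\otimes\mathbb{Q}\cong\mathbb{Q}^N$ are linearly independent, which forces $r\le N$. The tool is the Euler form
\[
 \langle U,V\rangle=\dim_k\Hom_H(U,V)-\dim_k\Ext_H^1(U,V).
\]
Since $H$ is hereditary, $\Ext_H^{\ge2}=0$. Hence this form depends only on the classes of $U$ and $V$ in $K_0(H)$ and is bilinear: it is the alternating sum along a projective resolution of length at most one. In other words, $\langle-,-\rangle$ descends to a bilinear form on $K_0(H)$, and I will take this as the standard fact for finite-dimensional hereditary algebras.

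Next I compute the matrix $G_{ji}=\langle Y_j,Y_i\rangle$. For $j>i$ both terms vanish by hypothesis, so $G_{ji}=0$. For $j=i$ we have $\Ext_H^1(Y_i,Y_i)=0$, so $G_{ii}=\dim_k\End_H(Y_i)\ge1$, because $Y_i\ne0$. Thus $G$ is upper triangular with positive diagonal, hence invertible over $\mathbb{Q}$.

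Now suppose $\sum_j c_j[Y_j]=0$ with rational $c_j$ not all zero, and let $m$ be the largest index with $c_m\neq0$. Pair this relation on the right with $Y_m$, i.e.\ apply $\langle-,[Y_m]\rangle$:
\[
 0=\sum_{j\le m}c_j\langle Y_j,Y_m\rangle .
\]
This does not isolate $c_m$, since the terms with $j<m$ need not vanish. So instead I pair on the left with $Y_t$ for the least index $t$ with $c_t\ne0$, using $\langle Y_t,-\rangle$:
\[
 0=\sum_{j\ge t}c_j\langle Y_t,Y_j\rangle .
\]
This still involves the unknown upper entries. The cleaner route is to argue with the whole matrix: if the classes were dependent, the Gram matrix $G$ would have dependent rows, since row $j$ of $G$ is the linear functional $\langle Y_j,-\rangle$ evaluated at the $Y_i$. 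A relation $\sum_j c_j[Y_j]=0$ gives $\sum_j c_jG_{ji}=0$ for every $i$, contradicting invertibility of $G$. Therefore the classes are independent in a rank-$N$ lattice, and $r\le N$.

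The only step needing care is the bilinearity of the Euler form on $K_0(H)$, which requires global dimension at most one. This holds because $H$ is hereditary, and in the intended application $H$ is the separated algebra, whose rank is $N=2s$.
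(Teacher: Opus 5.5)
Your proof is correct and is essentially the paper's. Both use the Euler form to obtain an upper triangular Gram matrix $G=(\langle Y_j,Y_i\rangle)$ with positive diagonal, and your linear-independence deduction is the paper's factorization $G=D^TCD$, giving $r=\operatorname{rank}G\le N$, in different words. The two abandoned pairing attempts can be deleted: pairing the relation on the right with $Y_t$, for the least $t$ with $c_t\neq0$, would in fact isolate $c_t\dim_k\End_H(Y_t)$ directly.
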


\begin{proof}
The Euler form on $K_0(H)$ is the bilinear form
\[
 \langle[U],[V]\rangle_H=
 \dim_k\Hom_H(U,V)-\dim_k\Ext_H^1(U,V).
\]
It is well-defined by the long exact sequences and heredity of $H$.
Put $d_i=[Y_i]$. The matrix
\[
 G=(\langle d_j,d_i\rangle_H)_{0\le j,i<r}
\]
is upper triangular and has diagonal entries
$\dim_k\End_H(Y_i)>0$. Thus $G$ has rank $r$ over $\mathbb Q$.
On the other hand, if $D$ is the matrix whose columns are the $d_i$
in a basis of $K_0(H)$ and $C$ is the Euler matrix in that basis,
then $G=D^TCD$. Hence $\operatorname{rank}G\le N$.
\end{proof}

\section{Proof and consequences}\label{sec:main-proof}

In this section, we prove the main theorem and then give several consequences and examples.

\begin{proof}[\bf \emph{Proof of \cref{thm:main}}]
Projectivity implies the required vanishing. For the converse, the
assertion is invariant under Morita equivalence. Indeed, an exact
equivalence preserves extension groups, projectives and the number
$s$ of simple modules. It sends the regular module to a projective
generator, whose additive closure is the full subcategory of
projectives, so it preserves the condition defining $\perpA$.
It also preserves radical filtrations. Hence $J^3=0$ is preserved.
We may therefore assume that $A$ is basic.

Suppose that $M\in\perpA$ is non-projective and
$\Ext_A^q(M,M)=0$ for $1\le q\le L$, where $L=3s+1$.
Take $X_0,\ldots,X_{L-1}$ from \cref{prop:chain}, and put
\[
 B=A/J^2.
\]
The modules $X_i$ are annihilated by $J^2$, so they are $B$-modules.
They remain indecomposable and pairwise non-isomorphic, because
the morphisms between $B$-modules are unchanged by inflation to $A$.
By \cref{lem:quotient},
\begin{equation}\label{eq:B-orthogonal}
 \Ext_B^1(X_j,X_i)=0\quad(j\ge i),
 \quad\stableHom_B(X_j,X_i)=0\quad(j>i).
\end{equation}

There are exactly $s$ isomorphism classes of indecomposable
projective $B$-modules. Since the $X_i$ are pairwise non-isomorphic,
at most $s$ of them are projective over $B$. Retain all remaining
terms, with indices $i_0<\cdots<i_{r-1}$, and set
\[
 Y_a=F(X_{i_a})\quad(0\le a<r).
\]
Then $r\ge L-s=2s+1$. By \cref{lem:F}, each $Y_a$ is
indecomposable and non-projective. Formulas
\eqref{eq:ext-implication} and \eqref{eq:B-orthogonal} give
\[
 \Ext_H^1(Y_b,Y_a)=0\quad(b\ge a),
\]
including when the original $X_{i_a}$ is simple. Similarly,
\cref{cor:hom-transfer} gives
\[
 \Hom_H(Y_b,Y_a)=0\quad(b>a).
\]
The separated algebra $H$ has $2s$ simple modules. Therefore
\cref{lem:rank} gives $r\le2s$, contradicting $r\ge2s+1$.
\end{proof}

\begin{corollary}\label{cor:generator}
Let $A$ be a split finite-dimensional algebra with $J^3=0$.
Every self-orthogonal generator is projective.
\end{corollary}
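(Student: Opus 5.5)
Let $G$ be a self-orthogonal generator over $A$, so that $\Ext_A^i(G,G)=0$ for every $i>0$ and $A\in\add G$. The plan is to reduce directly to \cref{thm:main}, which requires checking two things: $G\in\perpA$, and condition (ii) of \cref{thm:main} for $M=G$.

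\textbf{Step 1: $G\in\perpA$.} Since $A\in\add G$, the regular module $A$ is a direct summand of $G^n$ for some $n\ge1$. By additivity of $\Ext$ in the second variable, $\Ext_A^i(G,A)$ is a direct summand of
\[
\Ext_A^i(G,G^n)\cong\Ext_A^i(G,G)^n=0
\]
for every $i>0$. Hence $\Ext_A^i(G,A)=0$ for all $i>0$, that is, $G\in\perpA$.

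\textbf{Step 2: condition (ii).} Self-orthogonality gives $\Ext_A^q(G,G)=0$ for all $q\ge1$. In particular this holds in the range $1\le q\le 3s+1$, where $s$ is the number of isomorphism classes of simple $A$-modules.

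\textbf{Step 3: conclusion.} Now $G$ satisfies the hypothesis $G\in\perpA$ and condition (ii) of \cref{thm:main}. The equivalence (ii)$\Rightarrow$(i) then shows that $G$ is projective. This is the same argument as in the proof of \cref{cor:ARC}, now phrased in terms of generators.

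There is no real obstacle here. The only point to check is that the generator condition yields orthogonality to $A$, and this follows from additivity of $\Ext$ as in Step 1. The hypotheses that $A$ is split and $J^3=0$ are exactly those required by \cref{thm:main}.
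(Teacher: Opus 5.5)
Your proof is correct and takes the same approach as the paper. The generator condition and additivity of $\Ext$ give $G\in\perpA$, and self-orthogonality supplies condition (ii) of \cref{thm:main}. The paper simply routes this through \cref{cor:ARC}, whose one-line proof is exactly your Steps 2 and 3.
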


\begin{proof}
If $A\in\add G$ and $\Ext_A^{>0}(G,G)=0$, then
$\Ext_A^{>0}(G,A)=0$. Apply \cref{cor:ARC}.
\end{proof}

\begin{corollary}\label{cor:TC2}
Let $A$ be a split finite-dimensional self-injective algebra with
$J^3=0$, and let $s$ denote its number of simple modules up to
isomorphism. If $\Ext_A^q(M,M)=0$ for $1\le q\le3s+1$, then $M$ is
projective. In particular, Tachikawa's second conjecture holds for
$A$.
\end{corollary}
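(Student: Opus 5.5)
The natural route is to reduce \cref{cor:TC2} directly to \cref{thm:main}. The only hypothesis of the theorem not already stated in the corollary is that $M\in\perpA$, and self-injectivity supplies it at once. For a self-injective algebra the regular module $A$ is injective, so $\Ext_A^i(X,A)=0$ for every module $X$ and every $i>0$. Hence $\perpA=A\text{-}\modu$. The remaining hypotheses are given: $A$ is split, $J^3=0$, and $s$ is the number of simple modules.

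With $M\in\perpA$ automatic, suppose $\Ext_A^q(M,M)=0$ for $1\le q\le 3s+1$. This is exactly condition (ii) of \cref{thm:main}, so the implication (ii)$\Rightarrow$(i) shows that $M$ is projective.

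For Tachikawa's second conjecture, take a module $M$ with $\Ext_A^i(M,M)=0$ for all $i>0$. Then in particular it vanishes in degrees $1\le i\le 3s+1$, so $M$ is projective by the first part.

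There is no real obstacle here. The only point worth stating explicitly is that injectivity of ${}_AA$ kills $\Ext_A^{>0}(-,A)$. A self-injective algebra is one for which ${}_AA$ is injective, and over a finite-dimensional algebra left and right self-injectivity coincide, so this holds for left modules as required.
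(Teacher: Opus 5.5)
Your proposal is correct and matches the paper's proof. Self-injectivity gives $\Ext_A^{>0}(M,A)=0$ for every $M$, so $M\in\perpA$ holds automatically and the implication (ii)$\Rightarrow$(i) of \cref{thm:main} applies directly.
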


\begin{proof}
Self-injectivity gives $\Ext_A^{>0}(M,A)=0$ for every $M$.
Apply \cref{thm:main}.
\end{proof}

The proof also gives a criterion for individual modules over
algebras of larger Loewy length.

\begin{proposition}\label{prop:eventual-syzygies}
Let $A$ be a split finite-dimensional algebra with $s$ simple modules
up to isomorphism, without a restriction on $J^3$. Let $M\in\perpA$.
Suppose that there is an integer $n_0\ge0$ such that
\[
 J^2\Omega_A^nM=0\hspace{2mm}\text{for all }n\ge n_0.
\]
If $\Ext_A^q(M,M)=0$ for $1\le q\le3s+1$, then $M$ is projective.
\end{proposition}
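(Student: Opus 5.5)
The plan is to replace $M$ by a sufficiently high syzygy and then repeat the proof of \cref{thm:main} word for word. Suppose $M$ is non-projective and $\Ext_A^q(M,M)=0$ for $1\le q\le L$, where $L=3s+1$. As in that proof, Morita invariance lets us assume $A$ is basic. Put $M'=\Omega_A^{n_0}M$. By \cref{lem:syzygy}, $M'$ lies in $\perpA$, is non-projective (apply the last assertion of the lemma $n_0$ times), and satisfies $\Ext_A^q(M',M')\cong\Ext_A^q(M,M)=0$ for $1\le q\le L$. Moreover $J^2\Omega_A^nM'=J^2\Omega_A^{n+n_0}M=0$ for every $n\ge0$.

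Now apply \cref{prop:chain} to $M'$ to obtain $X_0,\ldots,X_{L-1}$ satisfying (i)--(iv). The final clause of that proposition assumed $J^3=0$, so we need a substitute. Each $X_i$ is a direct summand of $\Omega_A^{i+1}M'$, since additivity of syzygies gives $X_i\mid\Omega_A^{i+1}M'$ by induction. Hence $J^2X_i\subseteq J^2\Omega_A^{i+1}M'=0$, so every $X_i$ is a $B$-module for $B=A/J^2$, whose radical squares to zero. The numbers of simple modules of $A$ and $B$ agree, and $B$ is basic and split.

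From here the argument of \cref{thm:main} uses $J^3=0$ nowhere else; it used only the $B$-module structure of the $X_i$. So we run it unchanged:
\begin{itemize}
 \item \cref{lem:quotient} transfers the orthogonality relations (iii) and the vanishing $\Ext^1_A(X_i,X_i)=0$ from (ii) to $B$, as in \eqref{eq:B-orthogonal}.
 \item Discard the at most $s$ terms that are $B$-projective, leaving $r\ge 2s+1$ terms.
 \item Apply $F$ and use \eqref{eq:ext-implication} and \cref{cor:hom-transfer}; \cref{lem:F} guarantees the images are indecomposable and non-projective.
 \item \cref{lem:rank} for the hereditary separated algebra $H$, which has $2s$ simples, then forces $r\le 2s$, a contradiction.
\end{itemize}

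The only step needing care is that the chain lives in the range where $J^2$ kills the modules. Shifting by $n_0$ secures this, since all indices $i+1\ge1$ land in syzygies of degree at least $n_0+1$. We should also confirm that the vanishing range of length $L$ survives the shift; \cref{lem:syzygy} gives exactly this. There is no substantial obstacle beyond this bookkeeping.
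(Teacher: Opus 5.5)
Your proposal is correct and follows the paper's proof essentially verbatim. Like the paper, you pass to $\Omega_A^{n_0}M$ using \cref{lem:syzygy}, apply \cref{prop:chain}, observe that each $X_i\mid\Omega_A^{n_0+i+1}M$ is annihilated by $J^2$, and rerun the separated-algebra rank argument over $A/J^2$ unchanged. The only point you leave implicit is that the Morita reduction preserves the hypothesis $J^2\Omega_A^nM=0$; this holds because equivalences preserve projective covers and radical filtrations.
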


\begin{proof}
If $M$ were non-projective, repeated use of \cref{lem:syzygy} would
make $\Omega_A^{n_0}M$ non-projective and preserve the given
vanishing range. Apply \cref{prop:chain} to this module. Every term
is a summand of some $\Omega_A^{n_0+i+1}M$ and is therefore
annihilated by $J^2$. The rest of the proof of \cref{thm:main}, with
the same quotient $A/J^2$, applies verbatim.
\end{proof}

\begin{remark}\label{rem:scope}
The assumption $M\in\perpA$ in \cref{thm:main} still requires
orthogonality to $A$ in every positive degree. Only the
self-extension condition has been replaced by a finite test.
In \cref{prop:eventual-syzygies}, all sufficiently high syzygies
must be annihilated by $J^2$. The proof does not use annihilation of a
single syzygy as a substitute for this hypothesis.
\end{remark}

\begin{example}\label{ex:cycle}
Let $Q_s$ be the oriented cycle on $s\ge2$ vertices, let $R$ be its
arrow ideal, and put $A_s=kQ_s/R^2$. Label its simple left modules
$S_i$ with indices modulo $s$ so that $\rad P_i\cong S_{i+1}$.
This is a self-injective Nakayama algebra, and
\[
 \Omega_{A_s}^nS_i\cong S_{i+n}.
\]
The simples are non-projective. Since the identity of a
non-projective simple does not factor through a projective,
\cref{lem:stable-ext} gives
\[
 \dim_k\Ext_{A_s}^n(S_i,S_i)=
 \begin{cases}1,&s\mid n,\\0,&s\nmid n\end{cases}
 \quad(n\ge1).
\]
Thus a uniform bound valid for algebras with $s$ simple modules
cannot be smaller than $s$. In particular, the bound $1$ from the
local case does not extend to the non-local case.

An example of Loewy length exactly three is $kQ_3/R^3$. Its
projectives have successive composition factors
$S_i,S_{i+1},S_{i+2}$, and it is again self-injective. One has
$\Omega^2S_i\cong S_i$, so
\[
 \Ext^1(S_i,S_i)=0,
 \quad\Ext^2(S_i,S_i)\cong k.
\]
\end{example}

\begin{example}\label{ex:ext-finite}
Let $q\in k^\times$ have infinite multiplicative order and set
\[
 \Lambda=k\langle x,y\rangle/(x^2,y^2,xy+qyx).
\]
This four-dimensional self-injective algebra has radical cube zero.
For $\lambda\in k^\times$, let $C(\lambda)$ have basis $u,v$ with
$xv=u$, $yv=\lambda u$, and $xu=yu=0$. A standard calculation due to Erdmann
\cite[Section~4.1]{Erdmann17} gives
\[
 \Omega C(\mu)\cong C(q^{-1}\mu),\qquad
 \dim_k\Hom_\Lambda(C(\mu),C(\lambda))=1+\delta_{\mu,\lambda},
\]
where $\delta$ is the Kronecker delta. Applying $\Hom(-,C(\lambda))$
to $$0\to C(q^{-1}\mu)\to\Lambda\to C(\mu)\to0$$ gives
\[
 \dim_k\Ext_\Lambda^1(C(\mu),C(\lambda))
 =\delta_{\mu,\lambda}+\delta_{\mu,q\lambda}.
\]
Dimension shifting therefore yields
\[
 \dim_k\Ext_\Lambda^n(C(\lambda),C(\lambda))
 =\begin{cases}1,&n=1,\\0,&n\ge2.\end{cases}
\]
The module $C(\lambda)$ is non-projective. Hence eventual vanishing
of self-extensions cannot replace the initial vanishing hypothesis
of \cref{thm:main}, even for a local self-injective algebra.
\end{example}

Our argument leaves a gap between the necessary bound $s$ in
\cref{ex:cycle} and the sufficient bound $3s+1$ in \cref{thm:main}.
Improving this estimate and finding further classes satisfying the
hypothesis of \cref{prop:eventual-syzygies} are questions left by
the proof.

\textbf{Acknowledgements.} Xiaojin Zhang is supported by the National Natural Science Foundation of China (Grant Nos. 12171207 and 12371038). Panyue Zhou is supported by the National Natural Science Foundation of China (Grant No. ~12371034).

\end{document}